\def\doctype{}
\newcommand{\comment}[1]{}
\let\oldsection\section
\newcommand\boldsection[1]{\oldsection{\bf #1}}
\newcommand\starsection[1]{\oldsection*{\bf #1}}
\renewcommand\section{\@ifstar\starsection\boldsection}
\newtheoremstyle{theorem}
  {12pt}		  
  {0pt}  
  {\sl}  
  {\parindent}     
  {\bf}  
  {. }    
  { }    
  {}     
\theoremstyle{theorem}
\newtheorem{thm}{Theorem}[section]  
\newtheorem{prop}[thm]{Proposition}
\newtheoremstyle{definition}
  {12pt}		  
  {0pt}  
  {}  
  {\parindent}     
  {\bf}  
  {. }    
  { }    
  {}     
\theoremstyle{definition}
\newtheorem{ex}[thm]{Example}
\renewcommand{\proofname}{Proof}
\renewenvironment{proof}[1][\proofname]{\par
  \pushQED{\qed}%
  \normalfont \partopsep=\z@skip \topsep=\z@skip
  \trivlist
  \item[\hskip\labelsep
        \scshape
    #1\@addpunct{.}]\ignorespaces
}{%
  \popQED\endtrivlist\@endpefalse
}
\renewcommand*\@maketitle{%
  \normalfont\normalsize
  \@adminfootnotes
  \@mkboth{\@nx\shortauthors}{\@nx\shorttitle}%
  \global\topskip42\p@\relax 
  \@settitle
  \ifx\@empty\authors \else {\vskip 1em
\vtop{\centering\shortauthors\@@par}} \fi
  \ifx\@empty\@date \else {\vskip 1em \vtop{\centering\@date\@@par}}\fi 
  \ifx\@empty\@dedicatory
  \else
    \baselineskip18\p@
    \vtop{\centering{\footnotesize\itshape\@dedicatory\@@par}%
      \global\dimen@i\prevdepth}\prevdepth\dimen@i
  \fi
  \@setabstract
  \normalsize
  \if@titlepage
    \newpage
  \else
    \dimen@34\p@ \advance\dimen@-\baselineskip
    \vskip\dimen@\relax
  \fi
} 
\renewcommand*\@adminfootnotes{%
  \let\@makefnmark\relax  \let\@thefnmark\relax
  \ifx\@empty\@subjclass\else \@footnotetext{\@setsubjclass}\fi
  \ifx\@empty\@keywords\else \@footnotetext{\@setkeywords}\fi
  \ifx\@empty\thankses\else \@footnotetext{%
    \def\par{\let\par\@par}\@setthanks}%
  \fi
\thispagestyle{titlepage}
}
\begin{document}

\title{\large Incidence structures near configurations of type $(n_3)$}

\author{Peter J.~Dukes}
\address{\rm Peter J.~Dukes:
Mathematics and Statistics,
University of Victoria, Victoria, Canada
}
\email{dukes@uvic.ca}

\author{Kaoruko Iwasaki}
\address{\rm Kaoruko Iwasaki:
Mathematics and Statistics,
University of Victoria, Victoria, Canada
}
\email{mphysics.f@gmail.com}

\thanks{Research of the first author is supported by NSERC grant number 312595--2017}

\date{\today}

\begin{abstract}
An $(n_3)$ configuration is an incidence structure equivalent to a linear hypergraph on $n$ vertices which is both 3-regular and 3-uniform.  We investigate a variant in which one constraint, say 3-regularity, is present, and we allow exactly one line to have size four, exactly one line to have size two, and all other lines to have size three.  In particular, we study planar (Euclidean or projective) representations, settling the existence question and adapting Steinitz' theorem for this setting.
\end{abstract}

\maketitle
\hrule



\section{Introduction}

A \emph{geometric} $(n_r,m_k)$ \emph{configuration} is a set of $n$ points and $m$ lines in the Euclidean (or projective) plane such that every line contains exactly $k$ points (uniformity) and every point is incident with exactly $r$ lines (regularity).  Gr\"{u}nbaum's book \cite{Gbook} is an excellent reference for the major results on this topic, including many variations.  

Our study here is a variation in which uniformity (or, dually, regularity) is mildly relaxed; see also \cite{BP-quasi} and \cite[\S 6.8]{PSbook} for prior investigations in this direction.

An \emph{incidence structure} is a triple $(P,\mathcal{L},\iota)$, where $P$ is a set of \emph{points}, $\mathcal{L}$ is a set of \emph{lines}, and $\iota \subseteq P \times \mathcal{L}$ is a relation called \emph{incidence}.  Here, we assume $P$ is finite, say $|P|=n$, and no two different lines are incident with the same set of points.  In this case, we may identify $\mathcal{L}$ with a set system on $P$. Equivalently, the points and lines can be regarded as vertices and edges, respectively, of a (finite) hypergraph.  
A \emph{point-line incidence structure} is an incidence structure in which any two different points are incident with at most one line.  (Under this assumption, the associated hypergraph is called \emph{linear}.)  We make the standard assumption in this setting that each line is incident with at least two points.

A point-line incidence structure in which all lines have the same number of points and all points are incident with the same number of lines is called a (combinatorial) \emph{configuration}.  As before, the abbreviation $(n_r,m_k)$ represents that there are $n$ points, each incident with exactly $r$ lines, and $m$ lines, each incident with exactly $k$ points.
In alternate language, such an incidence structure is then an $r$-regular, $k$-uniform linear hypergraph on $n$ vertices and $m$ edges.   Counting incidences in two ways, one has $nr=mk$.
In either the geometric or combinatorial setting, notation in the case $n=m$ (hence $k=r$) is simplified to $(n_k)$ \emph{configuration}.

{\bf Examples.}
The complete graph $K_4$ is a $(4_3,6_2)$ configuration, also known as a `quadrangle'.  The Desargues' configuration is a $(10_3)$ configuration.  The  Fano plane is a $(7_3)$ configuration.  The quadrangle and Desargues' configuration are geometric, whereas the Fano plane is not.

A point-line incidence structure is \emph{geometric} if it can be realized with points and lines in the plane, with the usual notion of incidence.

Consider a point-line incidence structure with $n_i$ points of degree $r_i$, $i=1,\dots,s$, and $m_j$ lines of size $k_j$, $j=1,\dots,t$.  As before, counting incidences yields the relation
\begin{equation}
\label{incidences}
\sum_{i=1}^s n_i r_i = \sum_{j=1}^t m_j k_j.
\end{equation}
The \emph{signature} of such a point-line incidence structure is the pair of polynomials
$(f(x),g(y))$, where $f(x)=\sum_{i=1}^s n_i x^{r_i}$ and $g(y)=\sum_{j=1}^t m_j y^{k_j}$.
Note that (\ref{incidences}) can be rewritten as $f'(1)=g'(1)$.


In particular, an $(n_r,m_k)$ configuration has signature $(nx^r,my^k)$.
As an example with mixed line sizes, deleting one point from the Fano plane results in a point-line incidence structure with signature $(6x^3,3y^2+ 4y^3)$.  For another example, placing an extra point on one line of the Desargues' configuration results in a point-line incidence structure with signature $(x+10x^3,4y+9y^3)$.

In \cite{BP-quasi}, Borowski and Pilaud use point-line incidence structures with signature $(ax^3+bx^4,cy^3+dy^4)$ to produce the first known examples of $(n_4)$ configurations for $n=37$ and $43$.  This serves as excellent motivation for further study of point-line incidence structures which are `approximately' configurations.

In this note, we consider structures with signature $(nx^3,y^2+(n-2)y^3+y^4)=(nx^3,ny^3+y^2(1-y)^2)$, and in particular their geometric realizations.  One may view these as approximate $(n_3)$ configurations. Our main result is point-line incidence structures with such signatures exist if and only if $n \ge 9$, and in fact there is one with a geometric representation in each case.  This is shown in Section~2.  On the other hand, not all structures with our signature are geometric.  We observe that Steinitz' theorem on  representing $(n_3)$ configurations with at most one curved line assumes a stronger form in our setting, essentially characterizing which of our structures are realizable via their bipartite incidence graph.  This is covered in Section~3.  We conclude with a few extra remarks, including a connection to `fuzzy' configurations, in which some points are replaced by intervals.

\section{Existence}

Our first result easily settles the existence question for the abstract combinatorial case.

\begin{thm}
\label{comb-existence}
There exists a $($combinatorial$)$ point-line incidence structure having signature $(nx^3,ny^3+y^2(1-y)^2)$ if and only if $n \ge 9$.  
\end{thm}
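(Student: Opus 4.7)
The plan is to treat necessity and sufficiency separately. For the necessity of $n \ge 9$, I first handle $n \le 7$ by pair-counting: the total number of point-pairs covered by lines is $\binom{3}{2}(n-2) + \binom{2}{2} + \binom{4}{2} = 3n + 1$, and linearity requires $3n + 1 \le \binom{n}{2}$, which fails for all $n \le 7$. For $n = 8$ I sharpen with a degree argument on the unique size-$4$ line $L_1$: by linearity no other line contains two points of $L_1$, so each of the seven non-$L_1$ lines contributes at most one incidence with $L_1$, giving a bound of $7$ on the total non-$L_1$ incidences with $L_1$. On the other hand, each point of $L_1$ has $3 - 1 = 2$ non-$L_1$ incidences (in either sub-case $L_1 \cap L_2 = \emptyset$ or $|L_1 \cap L_2| = 1$, since the shared point swaps one size-$3$ line for $L_2$), so the total is exactly $8$. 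The contradiction $8 > 7$ rules out $n = 8$.

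For sufficiency I would provide explicit constructions for $n = 9, 10, 11$ and then iterate a \emph{triangle-addition} operation. The operation takes a valid structure on $n$ points, picks any size-$3$ line $\{a,b,c\}$, introduces three new points $x, y, z$, and replaces $\{a,b,c\}$ by the four size-$3$ lines $\{a,x,y\}$, $\{b,y,z\}$, $\{c,z,x\}$, $\{x,y,z\}$. The size-$4$ and size-$2$ lines are untouched; all point-degrees remain $3$; the size-$3$ line count grows by three (matching $(n+3) - 2$); and linearity is preserved because every new pair contains at least one of $x, y, z$. Since $\{9, 10, 11\}$ covers all residues modulo $3$, the three base cases propagate to every $n \ge 9$.

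I expect the main obstacle to be the base case $n = 9$, where the counting above is saturated: every size-$3$ line must contain exactly one point of $L_1$, forcing a delicate balance of incidences on the points outside $L_1 \cup L_2$. My plan is to fix $L_1 = \{0,1,2,3\}$, $L_2 = \{0,4\}$, and the unique size-$3$ line through $0$ as $\{0,7,8\}$; then the remaining six size-$3$ lines must have the form $\{i, u, v\}$ with $i \in \{1,2,3\}$ appearing twice and $\{u, v\} \subset \{4,5,6,7,8\}$ avoiding the already-used pair $\{7,8\}$, while realising the prescribed degree sequence $(2,3,3,2,2)$ on $(4,5,6,7,8)$. A short combinatorial search produces six such edges that can moreover be partitioned into three pairs of disjoint edges (one pair per label in $\{1,2,3\}$). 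The cases $n = 10$ and $n = 11$ have more slack and admit analogous direct constructions; alternatively, $n = 10$ can be realized by merging two suitably chosen disjoint lines of the Desargues $(10_3)$ configuration into a size-$4$ and a size-$2$ line.
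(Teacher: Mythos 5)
Your necessity argument is fine (it is a two-case repackaging of a single count: the size-$4$ line together with the $4\cdot 2 = 8$ distinct further lines through its points forces at least $9$ lines, i.e.\ $n\ge 9$), and your $n=9$ base case can indeed be completed as you sketch. The fatal problem is the \emph{triangle-addition} step that carries the whole induction. The four replacement lines $\{a,x,y\}$, $\{b,y,z\}$, $\{c,z,x\}$, $\{x,y,z\}$ are not linear: the pair $\{x,y\}$ lies on both $\{a,x,y\}$ and $\{x,y,z\}$, and likewise for $\{y,z\}$ and $\{z,x\}$. Your justification (``every new pair contains at least one of $x,y,z$'') shows only that no \emph{old} pair is re-covered; it says nothing about pairs of new points, which are exactly where the failure occurs.

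Worse, this cannot be patched while keeping three new points. The three new points must each reach degree $3$, contributing $9$ incidences spread over at most four size-$3$ lines; if a line contains $k_i$ new points then it covers $\binom{k_i}{2}$ new--new pairs, and minimizing $\sum_i\binom{k_i}{2}$ subject to $\sum_i k_i=9$, $k_i\le 3$, gives at least $6$ covered pairs, while only $\binom{3}{2}=3$ such pairs exist. So some pair of new points is forced onto two common lines no matter how you arrange the lines. You need a genuinely different sufficiency mechanism: either a larger inductive increment with more room, or (as is standard for $(m_3)$ configurations) a single uniform construction. For instance, the translates of $\{0,1,3\}$ modulo $n-1$ form an $((n-1)_3)$ configuration for $n\ge 9$; adjoining a point $\infty$, deleting two disjoint lines such as $\{0,1,3\}$ and $\{4,5,7\}$, and inserting $\{0,1,3,\infty\}$, $\{4,\infty\}$, $\{5,7,\infty\}$ yields the required signature for every $n\ge 9$ in one stroke, with no induction and no base-case search.
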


\begin{proof}
Suppose there exists such an incidence structure.  Consider the line $L=\{p_1,p_2,p_3,p_4\}$ of size four. Each point $p_i$ is incident with exactly two other lines. These $8$ additional lines are distinct, by linearity and the fact that $p_i, p_j$ are already together on $L$ for each $1 \le i < j \le 4$.  It follows that the configuration has at least 9 lines, and so $n \ge 9$.

Conversely, suppose $n \ge 9$.  Let $\mathcal{L}$ denote the set of translates of $\{0,1,3\}$ modulo $n-1$.  Then $(\{0,1,\dots,n-2\},\mathcal{L})$ is an $((n-1)_3)$ configuration.  Add a new point $\infty$ and take the adjusted family of lines
$$\mathcal{L}':=\mathcal{L} \setminus \{\{0,1,3\},\{4,5,7\}\} \cup \{\{0,1,3,\infty\}, \{4,\infty\},\{5,7,\infty\}\}.$$
It is simple to check that $(\{0,1,\dots,n-2,\infty\},\mathcal{L}')$ is a point-line incidence structure with the desired signature.
\end{proof}

We turn now to the geometric case. The line adjustments used in the proof of Theorem~\ref{comb-existence} cannot be applied to an arbitrary geometric $((n-1)_3)$ configuration.  Moreover, the Mobi{\"u}s-Kantor $(8_3)$ configuration does not admit a geometric realization.  To this end, we start with an example on $9$ points.

\begin{ex}
\label{example9}
Figure~\ref{fig:example9} shows a (geometric) point-line incidence structure with signature $(9x^3,y^2+ 7y^3+y^4)$ given as (a) a projective realization with one point at infinity and (b) a Euclidean realization.  The lines of size 2 and 4 are highlighted. The dual incidence structure, in which there is exactly one point of degree 2 and one point of degree 4, is drawn in (c).
\end{ex}

\tikzset{->-/.style={decoration={
  markings,
  mark=at position #1 with {\arrow{>}}},postaction={decorate}}}
 
 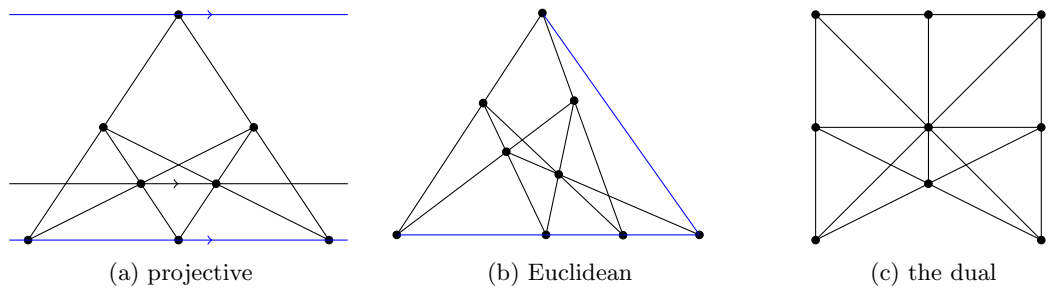
\begin{figure}[htbp]
    \centering
    \begin{subfigure}[b]{0.3\textwidth}
\begin{tikzpicture}[xscale=0.5,yscale=0.5]
\filldraw (0,0) circle [radius=.1cm];
\filldraw (4,0) circle [radius=.1cm];
\filldraw (-4,0) circle [radius=.1cm];
\filldraw (0,6) circle [radius=.1cm];
\filldraw (2,3) circle [radius=.1cm];
\filldraw (-2,3) circle [radius=.1cm];
\filldraw (1,1.5) circle [radius=.1cm];
\filldraw (-1,1.5) circle [radius=.1cm];
\draw [blue,->-=.6](-4.5,0)--(4.5,0);
\draw [blue,->-=.6](-4.5,6)--(4.5,6);
\draw [->-=.5] (-4.5,1.5)--(4.5,1.5);
\draw (-4,0)--(0,6);
\draw (4,0)--(0,6);
\draw (0,0)--(2,3);
\draw (0,0)--(-2,3);
\draw (-4,0)--(2,3);
\draw (4,0)--(-2,3);
\end{tikzpicture}
        \caption{projective}
    \end{subfigure}
\quad
    \begin{subfigure}[b]{0.3\textwidth}
\begin{tikzpicture}[xscale=0.4,yscale=0.4]
\filldraw (1.94,-2.78) node(a){} circle[radius=.125cm];
\filldraw (12.,-2.78) node(b){} circle[radius=.125cm];
\filldraw (6.78,4.6) node(c){} circle[radius=.125cm];
\filldraw (6.900080628268113,-2.779721990569654) node(d){} circle[radius=.125cm];
\filldraw (9.460040156039415,-2.789900756087632) node(e){} circle[radius=.125cm];
\filldraw (7.836720335992288,1.6862158940697083) node(f){} circle[radius=.125cm];
\filldraw (5.581856842862181,-0.01398512121645012) node(g){} circle[radius=.125cm];
\filldraw (4.810313241399388,1.6047738546899784) node(h){} circle[radius=.125cm];
\filldraw (7.321738309475747,-0.7692402681687687) node(i){} circle[radius=.125cm];
\draw [blue](a.center)--(b.center);
\draw [blue](b.center)--(c.center);
\draw (a.center)--(c.center);
\draw (c.center)--(e.center);
\draw (h.center)--(e.center);
\draw (h.center)--(d.center);
\draw (a.center)--(f.center);
\draw (b.center)--(g.center);
\draw (d.center)--(f.center);
\end{tikzpicture}
        \caption{Euclidean}
    \end{subfigure}
\hfill
    \begin{subfigure}[b]{0.3\textwidth}
\begin{tikzpicture}[scale=0.5]
\filldraw (0,1.5) circle [radius=.1cm];
\filldraw (0,3) circle [radius=.1cm];
\filldraw (0,6) circle [radius=.1cm];
\filldraw (3,0) circle [radius=.1cm];
\filldraw (3,3) circle [radius=.1cm];
\filldraw (3,6) circle [radius=.1cm];
\filldraw (-3,0) circle [radius=.1cm];
\filldraw (-3,3) circle [radius=.1cm];
\filldraw (-3,6) circle [radius=.1cm];
\draw (-3,0)--(-3,6);
\draw (3,0)--(3,6);
\draw (0,1.5)--(0,6);
\draw (-3,6)--(3,6);
\draw (-3,3)--(3,3);
\draw (-3,0)--(3,6);
\draw (-3,6)--(3,0);
\draw (-3,0)--(3,3);
\draw (-3,3)--(3,0);
\end{tikzpicture}
        \caption{the dual \phantom{XXXXX}}
    \end{subfigure}
    \caption{a point-line incidence structure with signature $(9x^3,y^2+ 7y^3+y^4)$}
    \label{fig:example9}
\end{figure}

Here, it is helpful to cite a result which is  used (very mildly) below, and (more crucially) for our structural considerations in Section 3.  This result, due to Steinitz, roughly says that configurations of type $(n_3)$ are `nearly' geometric. However, the reader is encouraged to see Gr\"unbaum's discussion \cite[\S 2.6]{Gbook} of `unwanted incidences', which in certain cases cannot be avoided.

\begin{thm}[Steinitz,\cite{Steinitz}; see also \cite{Gbook}]
For every combinatorial $(n_3)$ configuration, there is a representation of all but at most one of its incidences by points and lines in the plane.
\end{thm}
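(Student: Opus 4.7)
My plan is to realize the configuration by working with the bipartite incidence graph (Levi graph) $G$ of the $(n_3)$ configuration, which is 3-regular. By König's edge-colouring theorem, $G$ admits a decomposition into three perfect matchings $M_1, M_2, M_3$. The key observation is that the 2-regular subgraph $H = M_1 \cup M_2$ assigns to each line of the configuration exactly two of its three points. Since a line in the plane is determined by any two of its points, \emph{any} injective placement of the $n$ points in the plane (in sufficiently general position) automatically realizes all $2n$ incidences of $H$: just draw each line as the unique straight line through its two $H$-designated points.

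The third matching $M_3$ then becomes a system of $n$ collinearity constraints on the point coordinates — namely, for each point $p$ with $M_3$-mate $L$, the point $p$ must lie on the straight line already drawn through the two $H$-designated points of $L$. I would show that $n-1$ of these $n$ constraints can be simultaneously enforced by an incremental construction. Fix a projective frame by placing a few initial points, and order the remaining points so that each one can be positioned using its remaining $2$-dimensional freedom to satisfy one outstanding $M_3$-condition (a single linear equation). After all but one point has been placed in this way, a single collinearity condition — the one corresponding to the $M_3$-edge at the last point — either holds by geometric necessity or fails; at worst, exactly one incidence of the configuration is not realized.

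The main obstacle is verifying that such an ordering of the points exists and that the incremental equations remain non-degenerate, so that each step actually consumes just one degree of freedom without forcing a coincidence of points, a line at infinity, or an unintended collinearity. This essentially reduces to a connectivity/spanning-tree argument in the Levi graph combined with a genericity perturbation: start from any base placement and perturb so that no spurious incidences arise before the construction proceeds, noting that the locus of 'bad' parameter values is a proper subvariety in the moduli of $n$-point placements. Grünbaum's discussion of unwanted incidences in \cite[\S 2.6]{Gbook} clarifies that a single unavoidable failure must be tolerated, which is precisely the conclusion of the theorem.
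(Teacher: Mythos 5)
The paper does not actually prove this statement; it quotes it from Steinitz and only later (Section~3) sketches the method: iteratively delete a vertex of minimum degree from the Levi graph, then reverse the deletions, so that every point (resp.\ line) except possibly the last one is introduced incident with at most two already-drawn lines (resp.\ points). Your K\"onig-based proposal has a genuine gap at the step where you claim each point can be placed ``to satisfy one outstanding $M_3$-condition.'' The condition attached to a point $p$ is that $p$ lie on the line spanned by the two $H$-designated points of $p$'s $M_3$-mate; to enforce it at the moment $p$ is placed, both of those designated points must already be present, and they are two \emph{distinct} points. No ordering of the points achieves this for all but one point: the first point placed has no predecessors and the second has only one, so at least two of the $n$ constraints are necessarily unsatisfiable at placement time under your scheme. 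The argument as written therefore yields only $n-2$ of the required collinearities, i.e.\ two lost incidences rather than one, and assigning the ``responsible'' point of each line in advance via a perfect matching is exactly what makes it too rigid.

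The repair is to let the \emph{order}, not the matching, decide which incidence of each line is deferred: a line is drawn as soon as two of its points are available (or earlier, through one or zero placed points, with remaining freedom), and its third incidence is charged to whichever of its three elements comes last. Once you allow this, the edge-colouring does no work, and the entire content of the theorem is the combinatorial claim that the $2n$ vertices of the Levi graph can be ordered so that every vertex but one has at most two neighbours among its predecessors. That is Steinitz's deletion lemma: a nonempty induced subgraph of minimum degree $3$ in a connected cubic graph must be the whole graph, so after deleting any one vertex one can always continue deleting vertices of degree at most $2$ until nothing remains. This is precisely the lemma your closing ``connectivity/spanning-tree argument'' gestures at but does not supply; together with the genericity perturbation you mention (needed to avoid unwanted incidences, parallel or coincident lines at the forced intersection steps, but secondary), it constitutes the whole proof.
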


This result, in combination with with Example~\ref{example9} and the existence of combinatorial $(n_3)$ configurations for $n \ge 7$, allows us to adapt the argument in Theorem~\ref{comb-existence} to a geometric one.

\begin{thm}
\label{geom-existence}
For each $n \ge 9$, there exists a point-line incidence structure having signature $(nx^3,ny^3+y^2(1-y)^2)$ with incidences represented as points and lines in the plane.  
\end{thm}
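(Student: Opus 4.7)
The plan is to split on $n=9$ versus $n \ge 10$. The case $n=9$ is settled directly by Example~\ref{example9}. For $n \ge 10$, the strategy is to combine Steinitz's theorem with the combinatorial construction in the proof of Theorem~\ref{comb-existence}, starting from an $((n-1)_3)$ configuration.

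Since $n-1 \ge 9 \ge 7$, a combinatorial $((n-1)_3)$ configuration $(P,\mathcal{L})$ exists, and by Steinitz's theorem it admits a representation in the projective plane in which at most one incidence fails to be realized by a straight line. Choose two \emph{disjoint} lines $\ell_1 = \{a,b,c\}$ and $\ell_2 = \{d,e,f\}$ of the configuration; such a pair exists because in any $(m_3)$ configuration each line meets at most six other lines, hence is disjoint from at least $m-7 \ge 2$ others when $m = n-1 \ge 9$. Arrange the Steinitz realization so that its one unrealized incidence is $d \in \ell_2$: in the drawing the straight line supporting $\ell_2$ passes through $e$ and $f$ but not through $d$. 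Let $\infty$ be the intersection of the drawn line $\ell_1$ with the drawn straight line through $e$ and $f$, and set
\[
\mathcal{L}' := (\mathcal{L} \setminus \{\ell_1,\ell_2\}) \cup \{\{a,b,c,\infty\},\{e,f,\infty\},\{d,\infty\}\},
\]
exactly as in the proof of Theorem~\ref{comb-existence}. The straight line extending $\ell_1$ supports $\{a,b,c,\infty\}$; the straight line through $e$ and $f$ extended through $\infty$ supports $\{e,f,\infty\}$ and, by choice of the broken incidence, does \emph{not} pass through $d$; and the straight line joining $d$ to $\infty$ supports the size-$2$ line $\{d,\infty\}$. All remaining straight-line incidences of the Steinitz representation carry over. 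The resulting point-line incidence structure has $n$ points, signature $(nx^3,\,ny^3 + y^2(1-y)^2)$, and is fully realized by points and straight lines in the plane.

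The principal subtlety is twofold. First, we must be able to place the one unrealized incidence of the Steinitz representation at the prescribed location $d \in \ell_2$. This is the main obstacle; it is handled by exploiting the freedom in the proof of Steinitz's theorem, where the one line left as a curve may typically be chosen from among the lines of the configuration, and by the latitude in choosing the underlying $((n-1)_3)$ configuration itself. Second, we must avoid extraneous collinearities among the new elements—for example, $\infty$ coinciding with a configuration point, or the line $\{d,\infty\}$ accidentally passing through a third configuration point. These pathological placements cut out a lower-dimensional locus in the (projective) space of admissible realizations, so a small perturbation compatible with the prescribed incidences suffices to avoid them, yielding a valid geometric representation of the desired structure.
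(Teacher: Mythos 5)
Your route is genuinely different from the paper's: you modify a single $((n-1)_3)$ configuration by splicing a new point $\infty$ into a Steinitz realization whose one broken incidence sits at a prescribed flag $(d,\ell_2)$, whereas the paper glues Steinitz realizations of a $(7_3)$ and an $((n-7)_3)$ configuration for $n\ge 14$ and exhibits explicit drawings for $9\le n\le 13$. Your version, if it worked, would be more uniform and would eliminate the need for the ad hoc small cases. But the step you yourself flag as ``the main obstacle'' is a genuine gap, not a technicality. Steinitz's theorem, as stated and as cited in the paper, guarantees only that \emph{at most} one incidence fails, with no control over which one; you need a realization in which the specific incidence $d\in\ell_2$ (with $\ell_2$ disjoint from a chosen $\ell_1$) \emph{definitely} fails while every other incidence holds. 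That is a strictly stronger demand, and for some flags it is provably unachievable: if the incidence $d\in\ell_2$ is a consequence of the remaining incidences --- as happens for the ``conclusion'' incidences of the Pappus $(9_3)$ configuration, by Pappus's theorem --- then every straight-line realization of the rest forces $d$ onto the line through $e$ and $f$, your point $\infty$ then lies on a line also containing $d$, and the sets $\{e,f,\infty\}$ and $\{d,\infty\}$ collapse onto one Euclidean line. The case $n=10$ is exactly where this bites: all three combinatorial $(9_3)$ configurations are geometrically realizable, so Steinitz gives you a drawing with \emph{no} broken incidence, and you must argue separately that some configuration admits a flag that is both non-forced and lies on a line disjoint from another line. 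The paper sidesteps this entirely by always using a Fano plane as one of its two pieces (being non-realizable, its Steinitz drawing has exactly one broken incidence, visible in the figure) and by drawing the cases $9\le n\le 13$ explicitly.

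A secondary, smaller issue: your closing perturbation argument (to keep $\infty$ off the existing points and to keep the line $d\infty$ from picking up a third point) is asserted rather than proved. A perturbation must stay inside the realization space of the $((n-1)_3)$ configuration while preserving the prescribed non-incidence, so the ``lower-dimensional locus'' claim needs that realization space to be positive-dimensional modulo projective transformations and the bad locus to be proper in it; neither is automatic. These coincidence issues are likely repairable, but the flag-placement problem above is the substantive missing ingredient, and as written the proof does not go through for $n=10$ in particular.
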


\begin{proof}
First, we consider values $n \ge 14$.  Take disjoint configurations $(P,\mathcal{L})$ and $(Q,\mathcal{H})$ of types $(7_3)$ and $((n-7)_3)$, respectively.  Apply Steinitz' theorem to each.  Assume that $\{p_1,p_2,p_3\} \in \mathcal{L}$, yet line $p_1p_2$ in the first Steinitz embedding does not contain $p_3$.  Similarly, suppose 
$\{q_1,q_2,q_3\} \in \mathcal{H}$ where $q_3$ need not be placed on line $q_1q_2$.  If it is not, we align the drawings in the plane such that line $p_1p_2$ coincides with line $q_1q_2$, creating a line of size four.  We finish by including the additional line $p_3 q_3$.  If the latter configuration has a geometric realization (which we may assume for $n \ge 16$),
we can simply place the drawings so that $p_3$ is on lines $q_1q_2q_3$, and include $p_1p_2$ as an additional line.  In either case, we have a geometric point-line configuration with the desired signature.

\begin{figure}[htbp]
\begin{tikzpicture}[scale=0.5]
\filldraw (0,0) node(a){} circle[radius=.1cm];
\filldraw (1,1) node(b){} circle[radius=.1cm];
\filldraw (1,-1) node(c){} circle[radius=.1cm];
\filldraw (2,2) node(d){} circle[radius=.1cm];
\filldraw (2,-2) node(e){} circle[radius=.1cm];
\filldraw (4,1) node(f){} circle[radius=.1cm];
\filldraw (4,-1) node(g){} circle[radius=.1cm];
\filldraw (2.5,0) node(h){} circle[radius=.1cm];
\filldraw (6,0) node(i){} circle[radius=.1cm];
\draw [blue,->-=.6](a.center)--(i.center);
\draw (a.center)--(d.center);
\draw (a.center)--(e.center);
\draw (i.center)--(d.center);
\draw (i.center)--(e.center);
\draw [->-=.6](b.center)--(f.center);
\draw [->-=.6](c.center)--(g.center);
\draw (b.center)--(g.center);
\draw (c.center)--(f.center);
\draw [blue](d.center)--(e.center);
\end{tikzpicture}
\quad
\begin{tikzpicture}[scale=0.5]
\filldraw (0,0) node(a){} circle[radius=.1cm];
\filldraw (1,2) node(b){} circle[radius=.1cm];
\filldraw (0,4) node(c){} circle[radius=.1cm];
\filldraw (2,0) node(d){} circle[radius=.1cm];
\filldraw (2,2) node(e){} circle[radius=.1cm];
\filldraw (3,0) node(f){} circle[radius=.1cm];
\filldraw (2,4) node(g){} circle[radius=.1cm];
\filldraw (3,2) node(h){} circle[radius=.1cm];
\filldraw (3,4) node(i){} circle[radius=.1cm];
\filldraw (4.5,0) node(j){} circle[radius=.1cm];
\filldraw (0,6) node(k){} circle[radius=.1cm];

\draw[blue] (a.center)--(j.center);
\draw (a.center)--(k.center);
\draw (a.center)--(g.center);
\draw (b.center)--(h.center);
\draw (c.center)--(i.center);
\draw (c.center)--(d.center);
\draw (g.center)--(d.center);;
\draw (f.center)--(i.center);
\draw (f.center)--(k.center);
\draw (j.center)--(k.center);
\draw [blue](i.center)--(j.center);
\end{tikzpicture}
\quad
\begin{tikzpicture}
\filldraw (-1,0) node(a){} circle [radius=.05cm];
\filldraw (-1,1) node(b){} circle [radius=.05cm];
\filldraw (0,1) node(c){} circle [radius=.05cm];
\filldraw (1,1) node(d){} circle [radius=.05cm];
\filldraw (1,0) node(e){} circle [radius=.05cm];
\filldraw (1,-1) node(f){} circle [radius=.05cm];
\filldraw (0,-1) node(g){} circle [radius=.05cm];
\filldraw (-1,-1) node(h){} circle [radius=.05cm];
\draw [blue](c.center)--(g.center);
\draw (b.center)--(h.center);
\draw  [->-=.7] (a.center)--(d.center);
\draw  [->-=.3] (f.center)--(a.center);
\draw  [->-=.7] (e.center)--(b.center);
\draw  [->-=.3] (h.center)--(e.center);
\draw  [->-=.3] (h.center)--(c.center);
\draw  [->-=.3] (f.center)--(c.center);
\draw (d.center)--(f.center);
\draw  [->-=.7] (g.center)--(b.center);
\draw  [->-=.7] (g.center)--(d.center);
\end{tikzpicture}
\quad
\begin{tikzpicture}[scale=0.5]
\filldraw (-2.7,0) node(a){} circle [radius=.1cm];
\filldraw (2.7,0) node(b){} circle [radius=.1cm];
\filldraw (-2,2) node(c){} circle [radius=.1cm];
\filldraw (-2,-2) node(d){} circle [radius=.1cm];
\filldraw (2,2) node(e){} circle [radius=.1cm];
\filldraw (2,-2) node(f){} circle [radius=.1cm];
\filldraw (-1,0) node(g){} circle [radius=.1cm];
\filldraw (1,0) node(h){} circle [radius=.1cm];
\filldraw (-.5,1) node(i){} circle [radius=.1cm];
\filldraw (.5,-1) node(j){} circle [radius=.1cm];
\draw [blue](a.center)--(b.center);
\draw [->-=.5](a.center)--(c.center);
\draw [->-=.5] (d.center)--(a.center);
\draw [->-=.5] (b.center)--(e.center);
\draw [->-=.5] (f.center)--(b.center);
\draw (c.center)--(h.center);
\draw (d.center)--(i.center);
\draw (e.center)--(j.center);
\draw (g.center)--(f.center);
\draw [->-=.6] (d.center)--(c.center);
\draw [->-=.6] (f.center)--(e.center);
\draw [blue] (i.center)--(j.center);
\end{tikzpicture}
    \caption{examples for $n=10,\dots,13$}
    \label{fig:others}
\end{figure}
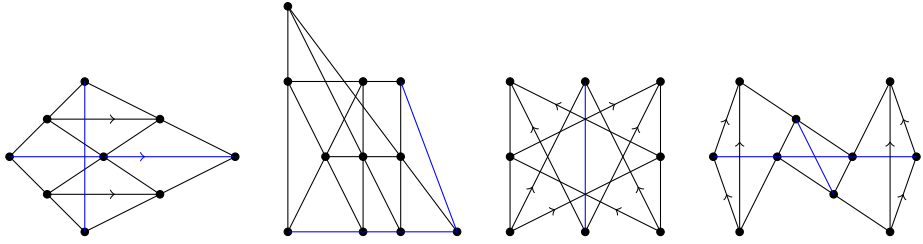

For the cases $9 \le n \le 13$, we use direct examples.  The case $n=9$ is shown in Example~\ref{example9}; the other values are shown in Figure~\ref{fig:others}.  Note that projective points are used in some cases, as indicated with arrows on parallel (Eudlidean) lines.
\end{proof}

\begin{ex}
As an illustration of the proof technique, Figure~\ref{fig:example14} shows how two Fano planes with missing incidences can be aligned to produce an example in the case $n=14$. 
\end{ex}

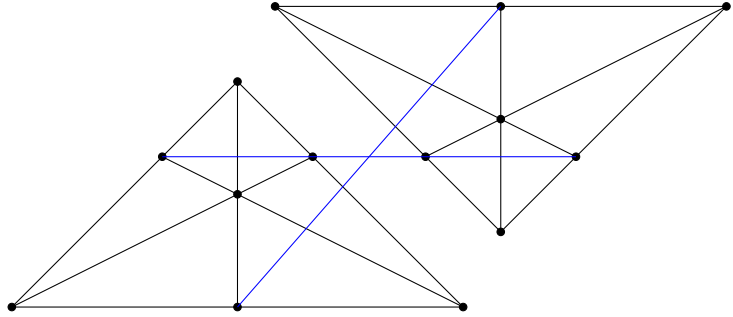
\begin{figure}[htbp]
\begin{tikzpicture}[scale=0.5]
\filldraw (0,0) node(a){} circle [radius=.1cm];
\filldraw (-6,0) node(b){} circle [radius=.1cm];
\filldraw (6,0) node(c){} circle [radius=.1cm];
\filldraw (0,6) node(d){} circle [radius=.1cm];
\filldraw (-2,4) node(e){} circle [radius=.1cm];
\filldraw (2,4) node(f){} circle [radius=.1cm];
\filldraw (0,3) node(g){} circle [radius=.1cm];
\draw (b.center)--(c.center);
\draw (a.center)--(d.center);
\draw (b.center)--(d.center);
\draw (c.center)--(d.center);
\draw (b.center)--(f.center);
\draw (c.center)--(e.center);

\filldraw (7,8) node(aa){} circle [radius=.1cm];
\filldraw (1,8) node(bb){} circle [radius=.1cm];
\filldraw (13,8) node(cc){} circle [radius=.1cm];
\filldraw (7,2) node(dd){} circle [radius=.1cm];
\filldraw (5,4) node(ee){} circle [radius=.1cm];
\filldraw (9,4) node(ff){} circle [radius=.1cm];
\filldraw (7,5) node(gg){} circle [radius=.1cm];
\draw (bb.center)--(cc.center);
\draw (aa.center)--(dd.center);
\draw (bb.center)--(dd.center);
\draw (cc.center)--(dd.center);
\draw (bb.center)--(ff.center);
\draw (cc.center)--(ee.center);

\draw [blue](e.center)--(ff.center);
\draw [blue](a.center)--(aa.center);

\end{tikzpicture}
    \caption{a compound example with $n=14$ points}
    \label{fig:example14}
\end{figure}

\section{Structure}

In Theorem~\ref{geom-existence}, we merely proved the existence of one incidence structure with signature $(nx^3,ny^3+y^2(1-y)^2)$ having a geometric representation. It is easy to build (combinatorial) such incidence structures which are non-geometric using a similar method.

\begin{prop}
For each $n \ge 14$, there exists a point-line incidence structure having signature $(nx^3,ny^3+y^2(1-y)^2)$
and with no planar representation.
\end{prop}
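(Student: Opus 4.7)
The plan is to embed a Fano plane as a sub-incidence-structure of a combinatorial structure $S$ with the target signature; then any planar representation of $S$ would restrict to a planar representation of the Fano plane, which is impossible by the classical fact recalled in the introduction.

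Concretely, I would start with a Fano plane $F$ on points $f_1,\dots,f_7$ and single out one of its lines, say $L=\{f_1,f_2,f_3\}$. Since $n-7 \ge 7$, a combinatorial $((n-7)_3)$ configuration $C$ exists (for instance via the cyclic construction used in the proof of Theorem~\ref{comb-existence}). Choose such a $C$ together with any line $\{a,b,c\}$ of it, then replace this line by the $2$-line $\{a,b\}$; call the resulting (linear) structure $C'$. This $C'$ lives on $n-7$ points, has $c$ as its unique degree-$2$ point, and consists of one $2$-line together with $n-8$ lines of size $3$. Finally, form $S$ by taking the disjoint union of $F$ and $C'$, deleting $L$ from $F$, and inserting $\{f_1,f_2,f_3,c\}$ as a new size-$4$ line.

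The verification splits into three routine checks: (i) the signature of $S$ is $(nx^3,ny^3+y^2(1-y)^2)$, which follows from a quick tally of line sizes and a point-degree audit for $f_1,f_2,f_3$ and $c$; (ii) $S$ is linear, because within $F$ and within $C'$ linearity is preserved by the swap of a $3$-line for a $2$-line on the same underlying pair, and the only $F$--$C'$ crossing incidences go through the single new $4$-line; and (iii) every Fano collinearity is realized in any planar drawing of $S$. Point (iii) is the heart of the matter: the six lines $L_i \ne L$ of $F$ remain unchanged size-$3$ lines of $S$, while the three Fano points on $L$ are forced to be collinear because they lie on the realized size-$4$ line $\{f_1,f_2,f_3,c\}$. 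Hence any planar representation of $S$ induces a planar representation of the Fano plane, a contradiction.

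The main (mild) obstacle is the combinatorial existence and linearity of the auxiliary structure $C'$. Linearity is automatic because in $C$ the pair $\{a,b\}$ lay on no line other than the one removed. Existence reduces to the standard fact that $(m_3)$-configurations exist for every $m \ge 7$, which supplies a uniform construction valid for all $n \ge 14$ and completes the argument.
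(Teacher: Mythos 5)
Your construction is essentially identical to the paper's: both graft a Fano plane onto an $((n-7)_3)$ configuration by deleting a Fano line and a line $\{a,b,c\}$ of the other configuration, inserting the $4$-line $\{f_1,f_2,f_3,c\}$ and the $2$-line $\{a,b\}$, and then observing that the $4$-line forces the deleted Fano collinearity so that any planar representation would realize the Fano plane. The argument is correct; your write-up just spells out the degree audit and linearity check that the paper leaves implicit.
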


\begin{proof}
Take disjoint configurations $(P,\mathcal{L})$ and $(Q,\mathcal{H})$ of types $(7_3)$ and $((n-7)_3)$, respectively.  Consider the configuration $(P \cup Q,\mathcal{K})$, where $\mathcal{K}$ is formed from the lines of $\mathcal{L} \cup \mathcal{H}$ by replacing lines $\{p_1,p_2,p_3\} \in \mathcal{L}$ and $\{q_1,q_2,q_3\} \in \mathcal{H}$ with
$\{p_1,p_2,p_3,q_4\}$ and $\{q_2,q_3\}$.  This incidence structure has the desired signature.  It is non-geometric, since its `restriction' to $P$ is the Fano plane, which has no realization in the plane.
\end{proof}

Given an incidence structure $(P,\mathcal{L},\iota)$, its \emph{Levi graph} is the bipartite graph with vertex partition $(P,\mathcal{L})$ and edge set $\{\{p,L\}: (p,L) \in \iota\}$.  It is easy to see that the Levi graph of a linear incidence structure has girth at least six.  Steinitz' argument for obtaining geometric representations essentially works by iteratively removing vertices of lowest degree in the Levi graph, and then drawing the corresponding objects (as needed) in reverse.

We can use the Levi graph to test when a combinatorial point-line incidence structure of our signature has a geometric representation.

\begin{thm}
\label{struct}
A point-line incidence structure with signature $(nx^3,ny^3+y^2(1-y)^2)$ has a geometric realization if and only if its Levi graph contains no cut-edge whose removal leaves a $3$-regular component whose corresponding configuration is non-geometric.
\end{thm}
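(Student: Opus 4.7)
For the forward direction, I would proceed by contradiction: assume $H$ admits a geometric realization $\rho$ and that there exists a cut-edge $e$ in the Levi graph whose removal leaves a 3-regular component $C$ whose corresponding configuration is non-geometric. The first step is to identify $e$: the endpoint of $e$ lying in $C$ loses exactly one edge upon the removal, so its original degree must have been $4$. Since every point vertex has degree $3$ and line-degrees are $2$, $3$, or $4$, this endpoint must be the unique size-$4$ line $L$. Thus $e = \{p_i, L\}$ for some $i \in \{1,2,3,4\}$, and $C$ consists of $L$ together with some points and $3$-lines of $H$; inside $C$, the line $L$ plays the role of the $3$-line $L \setminus \{p_i\}$. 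Restricting $\rho$ to the vertices of $C$ (and keeping $L$ drawn as the straight line through $L \setminus \{p_i\}$) would then yield a geometric realization of $C$, contradicting non-geometricity.

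For the backward direction, I would assume the Levi graph has no such bad cut-edge and construct a geometric realization by reducing $H$ to an $(n_3)$ configuration and then invoking Steinitz' theorem. Writing $L = \{p_1, p_2, p_3, p_4\}$ and $L_2 = \{q_1, q_2\}$, I would form an auxiliary incidence structure $H'$ by deleting $L$ and $L_2$ and inserting, for some suitable choice, the two $3$-lines $L_a = L \setminus \{p_i\}$ and $L_b = \{p_i, q_1, q_2\}$ (or, if necessary, a split of the form $\{p_i, p_j, q_1\}$ and $\{p_k, p_l, q_2\}$). When this exchange preserves linearity, $H'$ is a combinatorial $(n_3)$ configuration. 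The first substep is to show that some such choice yields a linear $H'$: a double-counting argument bounds the number of conflicting $3$-lines of $H$ between $L$ and $L_2$, and the no-bad-cut-edge hypothesis would be used to rule out the pathological cases where every candidate fails.

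Given a linear $H'$, I would apply Steinitz' theorem to it, choosing the one possibly-curved line to be $L_b$. This produces positions for all points of $H$ with every line of $H'$ other than $L_b$ drawn straight; in particular $L_a$ is a straight line through three of the four points of $L$, and the size-$2$ line $L_2$ is automatically the straight line through $q_1, q_2$. The main obstacle, and the hardest step, will be to enforce the additional collinearity that places the fourth point $p_i$ on the line $L_a$, so that $L$ is realized as a single straight line through all of $p_1, p_2, p_3, p_4$. My plan is to exploit the projective freedom of Steinitz' iterative construction: at the step where $p_i$ is introduced, its position will be chosen on the line $L_a$. The no-bad-cut-edge hypothesis would be used to argue that this forced placement does not conflict with the positions dictated by the rest of the drawing, since any obstruction would correspond to a rigid 3-regular sub-configuration attached to $L$ via $\{p_i, L\}$, which by hypothesis must already be geometric and therefore can be redrawn flexibly enough to accommodate the required collinearity.
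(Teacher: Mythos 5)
Your necessity argument is essentially the paper's (the paper dismisses it as obvious): the endpoint of the cut-edge inside the $3$-regular component must be the degree-$4$ line vertex, and restricting a planar realization to that component would realize a non-geometric configuration. That part is fine.

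The sufficiency direction, however, has a genuine gap. Your plan is to pass to an auxiliary combinatorial $(n_3)$ configuration $H'$ obtained by re-splitting the incidences of the size-$4$ and size-$2$ lines into two $3$-lines $L_a=L\setminus\{p_i\}$ and $L_b$, apply Steinitz with the curved line chosen to be $L_b$, and then ``enforce'' the additional collinearity $p_i\in L_a$. But that last collinearity is an incidence of $H$ that is \emph{not} an incidence of $H'$: Steinitz' theorem realizes all but one of the incidences of $H'$ and gives you no mechanism for imposing an extra one. At the step of the iterative construction where $p_i$ is placed, it already sits on up to two previously drawn lines, so requiring it also to lie on $L_a$ is a third, generically unsatisfiable, constraint; the appeal to ``projective freedom'' and to redrawing a geometric sub-configuration ``flexibly enough'' is not an argument, and the no-bad-cut-edge hypothesis is never actually brought to bear. (The linearity of $H'$ is also left unverified, but that is the lesser problem.)

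The paper's proof avoids this entirely by running Steinitz' reduction on the Levi graph of $H$ itself rather than on an auxiliary $(n_3)$ configuration. The point you are missing is that the degree-$2$ vertex (the size-$2$ line) gives a free starting point: iteratively delete vertices of degree less than $3$. Either the whole graph is consumed, in which case reversing the deletions adds each vertex back subject to at most two constraints (a point on at most two existing lines, or a line through at most two existing points), so every incidence of $H$ is realizable with no curved line at all; or the process stalls at a nonempty $3$-regular subgraph. In the latter case only the degree-$4$ line vertex can have lost an edge, so the stalled core is joined to the rest by a single cut-edge at that vertex, the hypothesis guarantees the corresponding $(m_3)$ configuration is geometric, and one draws it first and then reverses the remaining deletions as before, beginning by placing a fourth point on the line corresponding to the degree-$4$ vertex. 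If you want to keep your own outline, you would need to replace the ``enforce the collinearity'' step with an argument of this reductive type; as written it does not go through.
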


\begin{proof}
Necessity of the condition is obvious. 

For sufficiency, consider the Levi graph of a structure of the given type. Remove the unique vertex of degree 2, and iteratively remove a vertex of degree less than 3 until there are no more such vertices.  Either we succeed in eliminating all vertices of the graph, or some 3-regular subgraph is left over.  In the latter case, note that the last vertex removed corresponds to one of the points on the line of size four.

If all vertices are eliminated, simply reverse the list of deletions and follow Steinitz' argument.  Since vertices get `added back' with degree at most two, we obtain a sequence of instructions of one of the following types: placing a new point, drawing a new line, placing a point on an existing line, drawing a line through an existing point, placing a point on the intersection of two existing lines, and drawing a line through a pair of existing points.  Each instruction can be carried out in the plane, and we have the desired representation.

Suppose a nonempty 3-regular graph remains after removing vertices.  By our assumption, it is the Levi graph of an $(n_3)$ configuration which admits a geometric representation.  Carry out the instructions as above to reverse vertex deletions, ensuring to begin by placing a new point on the cut-vertex to create a line of size four.  This again produces a representation of incidences in the plane.
\end{proof}

As a simple consequence, we may successfully carry out Steinitz' procedure in this setting provided the vertex of degree four in the Levi graph is incident with no cut-edge.

\section{Discussion}

A general method which we have found often works for the explicit construction of geometric `approximate' configurations can be loosely described as `moving incidences'.  It is usually not possible to simply move a point onto an existing line.  But, sometimes, a configuration can be perturbed slightly to achieve this.  (In doing so, some care must be taken to avoid unwanted incidences.)  Our hope is that such a process can be systematically described, producing a wide assortment of perturbations of known configurations.  If done in sufficient generality, such a process might handle other types beyond our simple case study of $(nx^3,ny^3+y^2(1-y)^2)$.

\begin{figure}[htbp]
    \centering
    \begin{subfigure}[b]{0.4\textwidth}
\begin{tikzpicture}
\foreach \a in {0,1,...,4}
 \draw  (\a*72: 1)--(\a*72+156: 3.17);
 \foreach \a in {0,1,...,4}
 \draw  (\a*72+12: 3.17)--(\a*72+156: 3.17);
\foreach \a in {0,1,...,4}
 \filldraw (\a*72: 1) circle [radius=.05];
 \foreach \a in {0,1,...,4}
 \filldraw (\a*72+12: 3.17) circle [radius=.05];
\node at (1.2,.2) {$p$};
\end{tikzpicture}
    \end{subfigure}
\quad
    \begin{subfigure}[b]{0.45\textwidth}
\begin{tikzpicture}
\foreach \a in {0,1,...,4}
 \draw  (\a*72: 1)--(\a*72+156: 3.17);
 \foreach \a in {0,1,...,4}
 \draw  (\a*72+12: 3.17)--(\a*72+156: 3.17);
\foreach \a in {1,...,4}
 \filldraw (\a*72: 1) circle [radius=.05];
 \foreach \a in {0,1,...,4}
 \filldraw (\a*72+12: 3.17) circle [radius=.05];
\filldraw (38:0.35)--(-38:0.35)--(-34:0.42)--(34:0.42);
\node at (0.1,0) {$I$};
\end{tikzpicture}
        \end{subfigure}
    \caption{`fuzzy' modification of a $(10_3)$ configuration}
    \label{fig:fuzzy}
\end{figure}
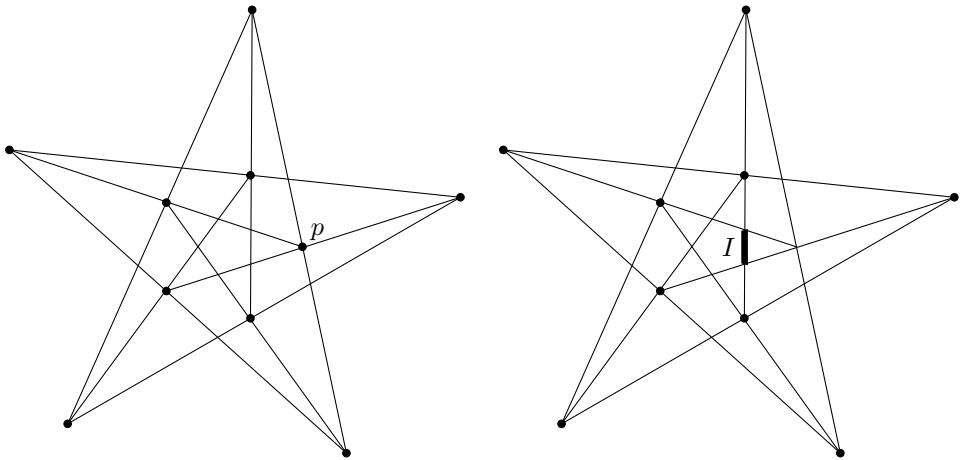

One possible way this can succeed is through the intermediate step of a `fuzzy' realization, which we loosely define as a collection of intervals (possibly points) and lines so that all incidences are represented in the usual way.  In this setting, an interval is deemed incident with a line if it intersects that line.  

Consider the example $(10_3)$ configuration shown on the left of Figure~\ref{fig:fuzzy}.  To construct the incidence structure on the right, we have removed one point, say $p$, and replaced it by an interval $I$ on two of the lines containing $p$.  With $I$ interpreted as an abstract point, the resulting incidence structure has signature $(10x^3,y^2+8y^3+y^4)$.
Under mild conditions, it is possible to perform such a replacement in a general $(n_3)$ configuration.  
It remains to shrink the interval $I$ to a point while maintaining other incidences.  This latter step is possible in the case of the fuzzy realization on the right of Figure~\ref{fig:fuzzy}, and in a few other cases we considered.  We feel an interesting question is identifying sufficient conditions for when such interval-eliminating perturbation can be carried out in general.

We have not attempted any enumeration or classification work for structures with the signature  $(nx^3,ny^3+y^2(1-y)^2)$.  But such work may offer some insight into a partial correspondence with, say, geometric $((n-1)_3)$ configurations.

\section*{Acknowledgement}

The authors would like to thank Gary MacGillivray and Vincent Pilaud for helpful discussions.


\begin{thebibliography}{99}

\bibitem{BP-quasi} 
J.~Bokowski and V.~Pilaud, Quasi-configurations: building blocks for point-line configurations.
\emph{Ars Math. Contemp.} 10 (2016), 99--112.

\bibitem{Gbook} 
B.~Gr\"unbaum,
\textit{Configurations of Points and Lines}. 
The American Mathematical Society, Providence, 2009.

\bibitem{PSbook}
T.~Pisanski and B.~Servatius,
\textit{Configurations from a Graphical Viewpoint}. 
Birkh\"auser Basel, New York, 2013.

\bibitem{Steinitz}
E.~Steinitz, \"Uber die construction der configurationen $n_3$. Ph.D. thesis, Breslau, 1894.

\end{thebibliography}
\end{document}